\newtheorem{theorem}{Theorem}[section]
\newtheorem{lemma}[theorem]{Lemma}
\theoremstyle{definition}
\newtheorem{definition}[theorem]{Definition}
\newtheorem{prob}{Problem}
\theoremstyle{remark}
\numberwithin{equation}{section}
\begin{document}

\title[Cyclic sieving on noncrossing (1,2)-configurations]
{Cyclic sieving on noncrossing (1,2)-configurations}

\author{Chuyi Zeng}
\address{School of Mathematics (Zhuhai), Sun Yat-sen University, Zhuhai 519082, Guangdong, P.R. China}
\email{zengchy28@mail2.sysu.edu.cn}
\author{Shiwen Zhang}
\address{School of Mathematics (Zhuhai), Sun Yat-sen University, Zhuhai 519082, Guangdong, P.R. China}
\email{zhangshw9@mail2.sysu.edu.cn}

\keywords{cyclic sieving; (1,2)-configuration; Catalan number; dihedral sieving}


\begin{abstract}
Verifying a suspicion of Propp and Reiner concerning the cyclic sieving phenomenon (CSP), M. Thiel introduced a Catalan object called noncrossing $(1,2)$-configurations (denoted by $X_n$), which is a class of set partitions of $[n-1]$. More precisely, Thiel proved that, with a natural action of the cyclic
group $C_{n-1}$ on $X_n$, the triple $\left(X_n,C_{n-1},\text{Cat}_n(q)\right)$ exhibits the CSP, where $\text{Cat}_n(q):=\frac{1}{[n+1]_q}\begin{bmatrix}
 2n\\
n
\end{bmatrix}_q$ is MacMahon's $q$-Catalan number. Recently, in a study of the fermionic diagonal coinvariant
ring $FDR_n$, J. Kim found a combinatorial basis for $FDR_n$ indexed by $X_n$. In this paper, we continue to study $X_n$ and obtain the following results: 
\begin{enumerate}
    \item We define a statistic $cwt$ on $X_n$ whose generating function is $\text{Cat}_n(q)$, which answers a problem of Thiel.
    \item We show that $\text{Cat}_n(q)$ is equivalent to $$\sum_{\substack{k,x,y\\2k+x+y=n-1}}\begin{bmatrix}
			n-1\\
			2k,x,y
		\end{bmatrix}_q\text{Cat}_k
  (q)q^{k+\binom{x}{2}+\binom{y}{2}+\binom{n}{2}}$$
  modulo $q^{n-1}-1$, which answers a problem of Kim. As mentioned by Kim, this result leads to a representation theoretic proof of the above cyclic sieving result of Thiel.
    \item We consider the dihedral sieving, a generalization of the CSP, which was recently introduced by Rao and Suk. Under a natural action of the dihedral group $I_2(n-1)$ (for even $n$), we prove a dihedral sieving result on $X_n$. 
\end{enumerate}

\end{abstract}

\maketitle{}

\section{Introduction}
The {\it cyclic sieving phenomenon} (CSP) has been a very active research topic in combinatorics since its introduction by Reiner, Stanton and White \cite{RSW} in 2004.  Let $X$ be a finite set carrying an action of a cyclic group $C_n=\{1,c,c^2,\dots, c^{n-1}\}$ and $X(q)$ a polynomial in $q$ with nonnegative integeral coefficients. We say that the triple $(X,C_n,X(q))$ exhibits the cyclic sieving phenomenon
if $X(1)=\vert X\vert$ and for every positive integer $d$, we have
$$|\{x\in X: c^d(x)=x\}|=X(\zeta_n^d),$$
    where $\zeta_n=e^{\frac{2\pi i}{n}}$. The CSP generalized Stembridge's $q=-1$ phenomenon \cite{Stem,Stembridge1994OnMR,MR1387685}. We refer to \cite{RSW2} and \cite{Sa} for some examples of the CSP.

Many studies of the CSP are concerned with Catalan objects, i.e., objects counted by the famous Catalan numbers $\text{Cat}_n:=\frac{1}{n+1}\binom{2n}{n}$. We refer to \cite{Stan2} for more than 200 interpretations of the Catalan numbers.  To consider the CSP for Catalan objects, in many cases, the polynomial $X(q)$ is choosen to be MacMahon's $q$-Catalan number:
$$\text{Cat}_n(q):=\frac{1}{[n+1]_q}\begin{bmatrix}
 2n\\
n
\end{bmatrix}_q,$$ 
which is defined by the $q$-analogues:
$$[n]_q:=1+q+q^2+\dots+q^{n-1},$$
$$[n]!_q:=[n]_q[n-1]_q\dots[2]_q[1]_q,$$
and the $q$-binomial coefficient
$$\begin{bmatrix}
 n\\
k
\end{bmatrix}_q:=\frac{[n]!_q}{[k]!_q[n-k]!_q}.$$ Note that the $q$-binomial coefficient is a polynomial with nonnegative integral coefficients \cite{Macdonald}. We refer to \cite{ALPU} for various cyclic sieving results for Catalan objects. 

Computational experiments by Propp and Reiner suggested that substituting an $(n-1)$-th root of unity into $\text{Cat}_n(q)$ always yields a positive integer. So they suspected that there exists a set $X_n$ with cardinality $|X_n|=\text{Cat}_n$, under the action of a cyclic group $C_{n-1}$ of order $n-1$, such that the triple $(X_n,C_{n-1},\text{Cat}_n(q))$ exhibits the CSP. In 2017, Thiel \cite{Thiel} verified their suspicion by constructing a Catalan object $X_n$ which is defined as follows.

\begin{definition}
    Let $n\ge 1$ and $[n]:=\{1,2,\dots,n\}$. 
    We call a subset of $[n]$
    \begin{itemize}
        \item a ball, if it has cardinality $1$;
        \item an arc, if it has cardinality $2$.
    \end{itemize}
    A \textbf{(1,2)-configuration} of $[n]$ is a set of pairwise disjoint balls and arcs.
    Say that a (1,2)-configuration 
    is \textbf{noncrossing} if it has no arcs $\{i_1,j_1\}$ and $\{i_2,j_2\}$ satisfying $i_1<i_2<j_1<j_2$. 
\end{definition}

\begin{figure}
    \centering
    \includegraphics{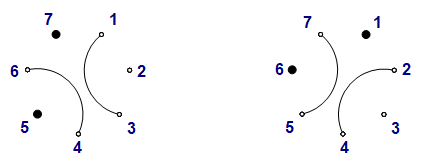}
    \caption{A noncrossing (1,2)-configuration\\ $x=\{\{1,3\},\{4,6\},\{5\},\{7\}\}$ (left) and its rotation $r(x)$ (right).}
    \label{fig:enter-label}
\end{figure}

We denote by $X_n$ the set of all noncrossing (1,2)-configurations of $[n-1]$. For example, $x=\{\{1,3\},\{4,6\},\{5\},\{7\}\}$ is an element in $X_8$. Thiel proved that the cardinality of $X_n$ is $\text{Cat}_n$ \cite[Theorem 2.1]{Thiel}.  There is a natural action of the cyclic group $C_{n-1}=\langle r\rangle$ on $X_n$: 
\begin{align*}
    r:[n-1]&\to[n-1],\\
  i&\mapsto i+1,\ \text{if}\ i\neq n-1 \\
  n-1&\mapsto 1.
\end{align*}
An example of this action is presented in Fig.\ref{fig:enter-label}. By direct computation, Thiel proved that the triple $(X_n,C_{n-1},\text{Cat}_n(q))$ exhibits the CSP and he proposed the following problem.

\begin{prob}\cite[Problem]{Thiel}\label{Thiel'sproblem}
Find a nice statistic on the set $X_n$ of noncrossing (1,2)-configurations whose generating function is $\text{Cat}_n(q)$.
\end{prob}

In Section 2, we answer Problem  \ref{Thiel'sproblem}. Our proof is based on an interpretation of $\text{Cat}_n(q)$ involving the major index statistic, which was obtained by MacMahon \cite{macmahon2001combinatory}.

Recently, the set $X_n$ also appeared in a study of the fermionic diagonal coinvariant ring, which was introduced by Jongwon Kim and Rhoades \cite{JKR1}. Let $\Theta_n=(\theta_1,\ldots,\theta_n)$ and $\Xi_n=(\xi_1,\ldots,\xi_n)$ be two sets of $n$ anticommuting variables. Denote by $\wedge\{\Theta_n,\Xi_n\}:=\wedge\{\theta_1,\ldots,\theta_n,\xi_1,\ldots,\xi_n\}$ the exterior algebra generated by these symbols over $\mathbb C$. Consider an action of the symmetric group $\mathfrak S_n$ on $\wedge\{\Theta_n,\Xi_n\}$ by permuting the indices in the following way
$\sigma\cdot \theta_i=\theta_{\sigma(i)}$ and $\sigma\cdot \xi_i=\xi_{\sigma(i)}$, where $\sigma\in \mathfrak S_n$ and $1\le i\le n$. Let $\wedge\{\Theta_n,\Xi_n\}^{\mathfrak S_n}_+$ denote the subspace of $\mathfrak S_n$-invariants with vanishing constant term. Then the fermionic diagonal coinvariant
ring is defined as
$$FDR_n:=\wedge\{\Theta_n,\Xi_n\}/\wedge\{\Theta_n,\Xi_n\}^{\mathfrak S_n}_+.$$
There is a bigraded structure on $FDR_n$ (see \cite{JKR2} for details). In \cite{JKR2}, Kim and Rhoades gave a combinatorial basis for the maximal degree components of $FDR_n$. This basis is indexed by certain set partitions for which the $\mathfrak S_n$-action is given by a skein action on noncrossing partitions first described by Rhoades \cite{Rh}. In \cite{kim2023combinatorial}, Kim generalized the above result for the entire $FDR_n$, where the combinatorial basis is considered under an action of $\mathfrak S_{n-1}\subset\mathfrak S_n$. Interestingly, this basis is indexed by the set of all noncrossing (1,2)-configurations of $[n-1]$, i.e., the set $X_n$.
Based on a theorem of Springer on regular elements \cite{springer1974regular}, Kim \cite[Theorem 6.1]{kim2023combinatorial} proved that the triple 
$$\left(X_n,C_{n-1},q^{\binom{n}{2}}\text{\bf fd}((FDR_n)_{i+j=n-1}))\right)$$ 
exhibits the CSP, where $\text{\bf fd}((FDR_n)_{i+j=n-1})$ is the fake degree of  $(FDR_n)_{i+j=n-1}$ (see \cite[Section 2]{kim2023combinatorial} for details) and in particular
$$\text{\bf fd}((FDR_n)_{i+j=n-1})=\sum_{\substack{k,x,y\\2k+x+y=n-1}}\begin{bmatrix}
			n-1\\
			2k,x,y
		\end{bmatrix}_q\text{Cat}_k
  (q)q^{k+\binom{x}{2}+\binom{y}{2}}.$$ Combining the above two cyclic sieving results on $X_n$, Kim proposed the following problem.

\begin{prob}\cite[Problem 6.3]{kim2023combinatorial}\label{prob2}
 Is there a direct computational proof that $$\text{Cat}_n(q)\equiv q^{\binom{n}{2}}\text{\bf fd}((FDR_n)_{i+j=n-1}))\ (\text{mod}\ q^{n-1}-1)?$$ 
\end{prob}
In Section 3, we answer problem \ref{prob2}. As mentioned by Kim, a solution of Problem \ref{prob2} leads to a representation theoretic proof of the above cyclic sieving result of Thiel.


Finally, we consider a generalization of the CSP which is called the dihedral sieving \cite{MR4064826}. Note that the CSP had been generalized for non-cyclic abelian groups; see, e.g., \cite{MR2418296,MR2837599}.
To generalized the CSP for all finite groups, we need to introduce an equivalent interpretation of the CSP in terms of representation theory. 

\begin{definition}\cite[Definition 2.2]{MR4064826}
    Let $G$ be a finite group and $A$ the set of isomorphism classes of finite-dimensional $G$-representations over $\mathbb C$. Let $\mathbb Z[A]$ be the polynomial ring over $\mathbb Z$ freely generated by $A$. Let $I$ (resp. $J$) be the ideal of $\mathbb Z[A]$ generated by the elements $[U\oplus V]-([U]+[V])$ (resp. $\{[U\otimes V]-([U][V])\}$), where $U$ and $V$ are finite-dimensional $G$-representations and $[U]$ (resp. $[V]$) denotes the isomorphism class of $U$ (resp. $V$). Then the {\it representation ring} $\text{Rep}(G)$ with coefficients in $\mathbb Z$ is defined as
$$\text{Rep}(G)=\mathbb Z[A]/(I+J).$$ 
\end{definition}

In other words, let $\text{Irr}(G)$ be the set of isomorphism classes of irreducible finite-dimensional $\mathbb C$-representations of $G$, the representation ring $\text{Rep}(G)$ is just the free abelian group $\mathbb Z[\text{Irr}(G)]$ with the multiplication defined by $[U]\cdot [V]=\sum_i[V_i]$ if $U\otimes V=\oplus_i V_i$ (\cite[Definition 2.1]{MR4459983}). 

For a cyclic group $C_n=\langle c \rangle$ of order $n$, the one-dimensional representation $\rho$ of $C_n$ defined by $c\mapsto e^{\frac{2\pi i}{n}}$ is a generator of $\text{Rep}(C_n)$. Then we have the following equivalent formulation \cite[Proposition 2.1]{RSW} of the CSP: The triple $(X,C_n,X(q))$ exhibits the CSP if and only if $[\mathbb{C}[X]]=[X(\rho)]$ in $\text{Rep}(C_n)$. Here, $\mathbb{C}[X]$ is the permutation representation of $C_n$ induced by the action of $C_n$ on $X$, $X(\rho)=a_m\rho^{\otimes m}\oplus\cdots \oplus a_1\rho\oplus a_01_{C_n}$ is a representation of $C_n$ if $X(q)=\sum_{i=0}^ma_iq^i\in\mathbb Z_{\ge 0}[q]$ and $1_{C_n}$ is the trivial representation of $C_n$.


With the above definition, Rao and Suk gave a general definition of group sieving.

\begin{definition}[\cite{MR4064826}, Definition 2.7]
 Let $G$ be a finite group and let $\rho_1,\dots,\rho_k$ be representations of $G$ over $\mathbb C$ which generate $\text{Rep}(G)$ as a ring. Let $X$ be a finite set carrying an action of $G$, $X(q_1,\dots,q_k)\in \mathbb{Z}[q_1,\dots,q_k]$. We say that the quadruple 
 $$(X,\ G,\ (\rho_1,\dots,\rho_k),\ X(q_1,\dots,q_k))$$ 
 exhibits \textbf{$G$-sieving} if and only if $[\mathbb{C}[X]]=[X(\rho_1,\dots,\rho_k)]$ in $\text{Rep}(G)$.
\end{definition}

Let $n$ be an odd integer. Let $I_2(n):=\langle r,s|r^n=s^2=1,rs=sr^{-1}\rangle$ be the dihedral group $I_2(n)$ of order $2n$. It is easy to see that the generating representations for $\text{Rep}(I_2(n))$ (see \cite[Section 3]{MR4064826}) are:
$$z_1:r\rightarrow \begin{pmatrix}
\text{cos}\frac{2\pi}{n}   &-\text{sin}\frac{2\pi}{n} \\
&\\
\text{sin}\frac{2\pi}{n}  & \text{cos}\frac{2\pi}{n}
\end{pmatrix},\ s\rightarrow \begin{pmatrix}
  0&1 \\
  &\\
  1&0
\end{pmatrix},$$
and
$$\quad-\text{det}:r\rightarrow -1, s\rightarrow 1.$$

In particular, we have the following.

\begin{definition}
     Let $n$ be an odd integer and $I_2(n)$ the dihedral group of order $2n$ which acts on a finite set X. Let $X(q,t)\in\mathbb Z[q,t]$. The quadruple 
     $$(X,\ I_2(n),\ \{\det,z_1\},\ X(q,t))$$ exhibits the dihedral sieving if  $\mathbb{C}[X]=X(\det,z_1)$ in $\text{Rep}(G)$.
\end{definition}

For the polynomial $X(q,t)$ involved in the dihedral sieving, there are two classes of polynomials appeared so far, one is the famous $q,t$-Catalan numbers $\text{Cat}_n(q,t)$ of Garsia and Haiman \cite{GarH} and the other one is the following. 
\begin{definition}\cite[Equation (2)]{MR3282645}
The generalized Fibonacci polynomials are a sequence $\{n\}_{q,t}$ of polynomials in $\mathbb N[q,t]$ defined inductively by
$$\{0\}_{q,t}=0,\quad \{1\}_{q,t}=1,$$
$$\{n+2\}_{q,t}=q\{n+1\}_{q,t}+t\{n\}_{q,t},$$
$$\{n\}!_{q,t}=\{n\}_{q,t}\{n-1\}_{q,t}\cdots\{1\}_{q,t},$$
and the Fibonomial coefficient is defined as
$$\begin{Bmatrix}
 n\\
k
\end{Bmatrix}_{q,t}=\frac{\{n\}!_{q,t}}{\{k\}!_{q,t}\{n-k\}!_{q,t}}.$$

\end{definition}

Note that the Fibonomial coefficient is a polynomial in $q$ and $t$ with nonnegative integral coefficients (see \cite[Theorem 5.2]{MR3282645}).     

In \cite{MR4064826}, Rao and Suk obtained some examples of the dihedral sieving on the following objects (with the corresponding $X(q,t)$) 
\begin{enumerate}
    \item the set of $k$-elements subsets of $[n]$ with $X(q,t)=\begin{Bmatrix}
 n\\
k
\end{Bmatrix}_{q,t}$,
    \item the set of $k$-elements multisubsets of $[n]$ with $X(q,t)=\begin{Bmatrix}
 n+k-1\\
k
\end{Bmatrix}_{q,t}$,
    \item the set of non-crossing partitions of then $n$-gon with 
    $$X(q,t)=\frac{1}{\{n+1\}_{q,t}}\begin{Bmatrix}
 2n\\
n
\end{Bmatrix}_{q,t},$$
    \item the set of non-crossing partitions of then $n$-gon using $n-k$ blocks with 
        $$X(q,t)=\frac{1}{\{n\}_{q,t}}\begin{Bmatrix}
 n\\
k
\end{Bmatrix}_{q,t}\begin{Bmatrix}
 n\\
k+1
\end{Bmatrix}_{q,t},$$
    \item the set of triangulations of the $n$-gon with $X(q,t)=(qt)^{\binom{n-2}{2}}\text{Cat}_{n-2}(q,t)$.
\end{enumerate}

In \cite{MR4459983}, the result (5) of Rao and Suk was generalized to $k$-angulations of an $n$-gon (together with some algebraic generalizations).

Under a natural action of the dihedral group $I_2(n-1)$ (for even $n$) on $X_n$, we prove a dihedral sieving result on $X_n$ in Section 4. Our result involves the polynomial $X(q,t)=\frac{1}{\{n\}_{q,t}}\begin{Bmatrix}
 n\\
k
\end{Bmatrix}_{q,t}\begin{Bmatrix}
 n\\
k+1
\end{Bmatrix}_{q,t}$.

The following sections are organized as follows. In Section 2, we define a statistic on the set $X_n$ whose generating function is $\text{Cat}_n(q)$. In section 3, we  answer the problem of Kim. In Section 4, we prove a dihedral sieving result for $X_n$. We end this paper with some concluding remarks in Section 5.

\bigskip





\section{A statistic on noncrossing (1,2)-configurations}
	In this section, we answer Problem \ref{Thiel'sproblem}. We would introduce a statistic on  $X_n$ and prove that its generating function is the $q$-Catalan number $\text{Cat}_n(q)$. We proceed by firstly recalling a result of MacMahon on Dyck paths and then constructing a bijection between noncrossing (1,2)-configurations and Dyck paths. 
        
	A {\it Dyck path}  is a lattice path from $(0,0)$ to $(n,n)$ that lies above (but may touch) the diagonal $y=x$. Let $\mathcal D_n$ denote the set of Dyck paths from $(0,0)$ to $(n,n)$. It is well-known that $|\mathcal D_n|=\frac{1}{n+1}\binom{2n}{n}$. Given $\pi\in \mathcal D_n$, let $\sigma(\pi)$ be a string resulting from the following algorithm.
    \begin{itemize}
        \item First initialize $\sigma(\pi)$ to be the empty string;
        \item Next, start at $(0,0)$, move along $\pi$ and add a $0$ to the end of $\sigma(\pi)$ every time an $N$ step is encountered, and add a $1$ to the end of $\sigma(\pi)$ every time an $E$ step is encountered.
    \end{itemize} 
    For example, the Dyck path $\pi$ in Figure \ref{fig3} satisfies $\sigma(\pi)=1110110001010010$. Let $\sigma=\sigma_1\sigma_2\cdots\sigma_n$ denote a string of length $n$. A {\it descent} of $\sigma$ is an integer $i$, $1\leq i\leq n-1$, for which $\sigma_i>\sigma_{i+1}$. Define the {\it major index statistic} of $\sigma$, denoted by $\text{maj}(\sigma)$, as the sum of the descents of $\sigma$ (we refer to \cite[Chapter 1]{Stan3} for more information about the descent and the major index statistic), i.e.,
	\begin{align*}
		\text{maj}
        (\sigma)={\sum_{\substack{i\\\sigma_i>\sigma_{i+1}}}}i.
	\end{align*}
    A {\it corner} $(a,b)$ of $\pi\in \mathcal D_n$ is a lattice point such that moving along $\pi$, encounter $(a-1,b)$, take a step to the east, encounter $(a,b)$, take a step to the north and then encounter $(a,b+1)$. Let $(a_1,b_1),(a_2,b_2),\dots,(a_\ell,b_\ell)$ be all the corners of $\pi$.
    By simple observation, we have
		\begin{equation*}
			\text{maj}(\sigma(\pi))=\sum_{i=1}^{\ell}(a_i+b_i).
		\end{equation*}
	\begin{lemma}{\rm(MacMahon \cite[p. 214]{macmahon2001combinatory})}\label{lem.Mac}
        For any positive integer $n$, we have
		\begin{equation*}
			\sum_{\pi\in \mathcal D_n}q^{\mathrm{maj}(\sigma(\pi))}=\frac{1}{[n+1]_q}\begin{bmatrix}
				2n\\
				n
			\end{bmatrix}_q.
		\end{equation*}
	\end{lemma}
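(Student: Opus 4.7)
My plan is to reduce the lemma to MacMahon's classical $q$-analogue of the binomial theorem and then extract the Dyck-path contribution via a $q$-weighted cycle lemma. As a first step, I would invoke the standard identity
$$\begin{bmatrix} 2n \\ n \end{bmatrix}_q = \sum_\tau q^{\mathrm{maj}(\sigma(\tau))},$$
where $\tau$ ranges over \emph{all} lattice paths from $(0,0)$ to $(n,n)$ (equivalently, binary words with $n$ zeros and $n$ ones). This is the familiar fact that the $q$-binomial coefficient is the major-index generating function for shuffles and is itself attributed to MacMahon. The lemma then reduces to establishing the refined identity
$$[n+1]_q \sum_{\pi \in \mathcal D_n} q^{\mathrm{maj}(\sigma(\pi))} \;=\; \sum_\tau q^{\mathrm{maj}(\sigma(\tau))}.$$

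To prove this, I would implement cyclic rotation on the codes $\sigma(\tau)$. The classical cycle lemma partitions lattice paths into equivalence classes of size $n+1$, each of which contains exactly one Dyck representative $\pi_0$. What must be verified on the $q$-level is that cyclic rotation by one position shifts the major index in a sufficiently controlled manner that the $n+1$ members of a single orbit contribute powers $q^0, q^1, \dots, q^n$ (times the common factor $q^{\mathrm{maj}(\sigma(\pi_0))}$), producing the desired factor $[n+1]_q$. Aggregating across orbits then yields the identity.

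The main obstacle is the descent bookkeeping under rotation: passing from $x_1 x_2 \cdots x_{2n}$ to $x_2 \cdots x_{2n} x_1$ shifts every existing descent position by $-1$, while a descent may be created or destroyed at the boundary depending on the pair $(x_1, x_{2n})$. Translating this into an explicit exponent change on $q$ and telescoping over the orbit is the technical heart of the argument. An alternative, perhaps cleaner, route is to induct on $n$ via the first-return decomposition $\pi = N\alpha E\beta$ of a Dyck path: the descents of $\alpha$ shift by $1$, those of $\beta$ shift by $2k+2$ (where $2k = |\alpha|$), and a possible new descent at position $2k+2$ appears precisely when $\beta$ begins with $N$. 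One then shows that $F_n(q) := \sum_{\pi \in \mathcal D_n} q^{\mathrm{maj}(\sigma(\pi))}$ satisfies a Carlitz--Riordan recursion matching that of $\mathrm{Cat}_n(q)$, with the base case $F_0(q) = 1$ being immediate.
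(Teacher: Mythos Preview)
The paper does not supply its own proof of this lemma: it is quoted as a classical result of MacMahon with a bare citation to his book, and is then used as a black box. So there is no in-paper argument to compare against; I can only assess whether your outline would stand on its own.

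Both routes you sketch have real gaps.

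\textbf{Cycle-lemma route.} Cyclic rotation of a length-$2n$ binary word produces orbits whose sizes \emph{divide} $2n$; they are never of size $n+1$ for $n\ge 2$. Already at $n=2$ the six words in $\{0,1\}^4$ with two zeros split into one orbit of size $4$ and one of size $2$, not two orbits of size $3$. The classical cycle lemma that produces the Catalan count operates on words of length $2n+1$ (with $n+1$ zeros and $n$ ones), where each orbit has size $2n+1$ and a unique positive representative; that is a different set-up and does not hand you the factor $[n+1]_q$. What you may be thinking of is the Chung--Feller theorem (each flaw count $0,1,\dots,n$ occurs $\mathrm{Cat}_n$ times among all $\binom{2n}{n}$ paths), but that is not literally a cyclic-rotation statement, and lifting it to a $q$-statement for $\mathrm{maj}$ is itself nontrivial and not achieved by the descent bookkeeping you describe.

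\textbf{First-return route.} Writing $\sigma(\pi)=0\,\sigma(\alpha)\,1\,\sigma(\beta)$ and shifting descent positions, the contribution of $\alpha$ is $\mathrm{maj}(\sigma(\alpha))+\mathrm{des}(\sigma(\alpha))$ and that of $\beta$ is $\mathrm{maj}(\sigma(\beta))+(2k+2)\,\mathrm{des}(\sigma(\beta))$ (plus the boundary descent at position $2k+2$ when $\beta\ne\emptyset$). The recursion therefore couples $\mathrm{maj}$ with $\mathrm{des}$ and does not close up into a recurrence for $F_n(q)$ alone. More importantly, the phrase ``Carlitz--Riordan recursion matching that of $\mathrm{Cat}_n(q)$'' conflates two different $q$-Catalan numbers: the Carlitz--Riordan polynomials are defined by $\tilde C_{n+1}(q)=\sum_k q^{k}\tilde C_k(q)\tilde C_{n-k}(q)$ and encode the \emph{area} statistic, whereas MacMahon's $\mathrm{Cat}_n(q)=\frac{1}{[n+1]_q}\genfrac{[}{]}{0pt}{}{2n}{n}_q$ does not satisfy that recursion. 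If you want an inductive proof you must track the joint $(\mathrm{maj},\mathrm{des})$ distribution (as in F\"urlinger--Hofbauer), or instead use the $q$-reflection identity
\[
\mathrm{Cat}_n(q)=\begin{bmatrix}2n\\ n\end{bmatrix}_q - q\begin{bmatrix}2n\\ n-1\end{bmatrix}_q
\]
together with the $\mathrm{maj}$ interpretation of each $q$-binomial and an Andr\'e-type reflection on the bad paths.
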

    Now, we define a statistic on $X_n$.
	\begin{definition}
		For a noncrossing (1,2)-configuration
		$$x=\{\{a_1,b_1\},\dots\{a_\ell,b_\ell\},\{c_1\},\dots,\{c_s\}\}\in X_n ,$$
		we define a statistic {\it cwt} of 
		$x$ as
  $$\mathrm{cwt}(x)=\sum_{i=1}^{\ell}(a_i+b_i)+2\sum_{j=1}^{s}c_j.$$	
	\end{definition}
  Actually, the cwt statistic is a weight function on $X_n$, where the weight of a ball is two and the weight of an arc is one for each vertex. For example, for $x=\{\{1,3\},\{4,6\},\{5\},\{7\}\}\in X_8$, we have $\text{cwt}(x)=1+3+4+6+2\times(5+7)=38$. 
  
  Now, we prove the main result of this section.
	\begin{theorem}
		For any positive integer $n$, we have $$\sum_{x\in X_n}q^{\mathrm{cwt}(x)}=\frac{1}{[n+1]_q}\begin{bmatrix}
			2n\\
			n
		\end{bmatrix}_q.$$
	\end{theorem}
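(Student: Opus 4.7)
The plan is to construct an explicit bijection $\phi \colon X_n \to \mathcal D_n$ satisfying $\mathrm{cwt}(x)=\mathrm{maj}(\sigma(\phi(x)))$ for every $x\in X_n$, and then conclude by Lemma~\ref{lem.Mac}.

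Given $x=\{\{a_1,b_1\},\ldots,\{a_\ell,b_\ell\},\{c_1\},\ldots,\{c_s\}\}\in X_n$, I would form the ``left-endpoint'' set $U(x)=\{a_1,\ldots,a_\ell,c_1,\ldots,c_s\}$ and the ``right-endpoint'' set $V(x)=\{b_1,\ldots,b_\ell,c_1,\ldots,c_s\}$; because balls and arcs in a configuration are pairwise disjoint subsets of $[n-1]$, these are actually sets of the same cardinality $m:=\ell+s$. Writing $U(x)=\{u_1<\cdots<u_m\}$ and $V(x)=\{v_1<\cdots<v_m\}$, I would define $\phi(x)$ to be the Dyck path in $\mathcal D_n$ whose set of corners is exactly $\{(u_1,v_1),\ldots,(u_m,v_m)\}$.

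Well-definedness reduces to checking $u_i\le v_i$ for every $i$. Since each contributing pair is either $(a_j,b_j)$ with $a_j<b_j$ or $(c_j,c_j)$, every such pair satisfies first coordinate $\le$ second coordinate; this implies $\#\{v\in V(x):v\le t\}\le \#\{u\in U(x):u\le t\}$ for each threshold $t$, and applying this at $t=v_i$ forces $u_i\le v_i$. The statistic identity is then direct:
\[
\mathrm{cwt}(x)=\sum_{j=1}^{\ell}(a_j+b_j)+2\sum_{j=1}^{s}c_j=\sum_{i=1}^{m}(u_i+v_i)=\mathrm{maj}(\sigma(\phi(x))),
\]
the last equality being the paper's formula expressing $\mathrm{maj}$ as the sum of the coordinates of the corners.

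It remains to show $\phi$ is a bijection; since $|X_n|=|\mathcal D_n|=\mathrm{Cat}_n$, I would do this by exhibiting an inverse. From the corner data $(u_i,v_i)$ of any $\pi\in\mathcal D_n$, one sets $U=\{u_i\}$, $V=\{v_i\}$, places balls at the positions of $U\cap V$, declares the positions in $U\setminus V$ (respectively $V\setminus U$) to be the left (respectively right) endpoints of the arcs, and pairs them by the standard stack algorithm (scan left to right, push each element of $U\setminus V$, pop when an element of $V\setminus U$ is read). The inequality $u_i\le v_i$ established above is exactly the ballot condition that keeps the stack nonempty at each pop. The delicate step I expect to require the most care is verifying that this decoding really returns the original $x$; for that I would invoke the classical fact that a noncrossing perfect matching of a prescribed set of left and right positions is unique and is produced by the stack algorithm. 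Since the arcs of $x\in X_n$ are already noncrossing, they must coincide with the stack-matched arcs, proving injectivity. Combined with Lemma~\ref{lem.Mac} this yields
\[
\sum_{x\in X_n}q^{\mathrm{cwt}(x)}=\sum_{\pi\in \mathcal D_n}q^{\mathrm{maj}(\sigma(\pi))}=\frac{1}{[n+1]_q}\begin{bmatrix}2n\\n\end{bmatrix}_q.
\]
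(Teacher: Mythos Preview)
Your proposal is correct and follows essentially the same approach as the paper. The paper formalizes the construction by introducing intermediate sets $NPS_n$ (noncrossing pair sequences) and $OPS_n$ (ordered pair sequences) and factors the bijection as $\varphi_2^{-1}\circ\varphi\circ\varphi_1$, where $\varphi$ simply sorts the second coordinates; this is exactly your map $x\mapsto\bigl((u_1,v_1),\ldots,(u_m,v_m)\bigr)$, and the paper's inverse algorithm (match coincident elements first, then pair each remaining $b$ with the largest unmatched $a$ below it) is precisely the stack algorithm you invoke.
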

	\begin{proof}
		Define a {\it noncrossing pair sequence} over $[n-1]\times [n-1]$ to be a sequence of pairs $$(a_1,b_1)(a_2,b_2)\cdots(a_\ell,b_\ell)$$ such that 
        \begin{itemize}
        \item $0\leq\ell\leq n-1$, where $\ell=0$ means that this sequence contains no elements and we use $\emptyset$ to denote it,
        \item $1\leq a_i\leq b_i\leq n-1$ for $1\leq i\leq \ell$, 
        \item $a_1<a_2<\cdots<a_\ell$ , 
        \item  $\{a_i,b_i\}\cap \{a_j,b_j\}=\emptyset$ (for $i\neq j$ and $1 \le i,j \le \ell$) and it does not contain crossing pairs, i.e., pairs $(a_i,b_i)$ and $(a_j,b_j)$ with $ a_i< a_j<b_i<b_j$. 
        \end{itemize}
	We denote by ${NPS}_n$ the set of all noncrossing pair sequences over $[n-1]\times [n-1]$. 	
		It is clear that a noncrossing pair sequence $(a_1,b_1)(a_2,b_2)\cdots(a_\ell,b_\ell)$ over $[n-1]\times [n-1]$ can present a noncrossing (1,2)-configuration $\{\{a_1,b_1\},\{a_2,b_2\},\dots,\{a_\ell,b_\ell\}\}$. Conversely, given a noncrossing (1,2)-configuration, we can also obtian a noncrossing pair sequence by the following steps. Firstly, we regard an arc $\{a,b\}$ as a pair $(\text{min}\{a,b\},\text{max}\{a,b\})$ and regard a ball $\{c\}$ as a pair $(c,c)$. Secondly, we arrange these pairs in ascending order from the first position to obtain a pair sequence in ${NPS}_n$. In this way, there is a bijection $\varphi_1$ from $X_n$ to ${NPS}_n$. 

            Define an {\it ordered pair  sequence} over $[n-1]\times [n-1]$ to be a sequence of pairs $$(a_1,b_1)(a_2,b_2)\cdots(a_\ell,b_\ell)$$ such that 
        \begin{itemize}
        \item $0\leq\ell\leq n-1$, where $\ell=0$ means that this sequence contains no elements and we use $\emptyset$ to denote it,
        \item $1\leq a_i\leq b_i\leq n-1$ for $1\leq i\leq \ell$,
        \item $a_1<a_2<\cdots<a_\ell$ and $b_1<b_2<\cdots<b_\ell$,
        \item  $\{a_i,b_i\}\cap \{a_j,b_j\}=\emptyset$ for $i\neq j$ and $1\leq i,j\leq n-1$. 
        \end{itemize}
	We denote by ${OPS}_n$ the set of all ordered pair  sequences over $[n-1]\times [n-1]$.
		Given an ordered pair sequence of ${OPS}_n$, we can regard all elements of this sequence as corners of a Dyck path. Conversely, given $\pi\in \mathcal D_n$, all the corners of $\pi$ can be orderly arranged into an ordered pair sequence in ${OPS}_n$ (arrange these corners in ascending order from the first position).
		 In this way, there is a bijection $\varphi_2$ from $\mathcal D_n$ to ${OPS}_n$. 
        \begin{figure}
     \centering
     \includegraphics[width=0.75\linewidth]{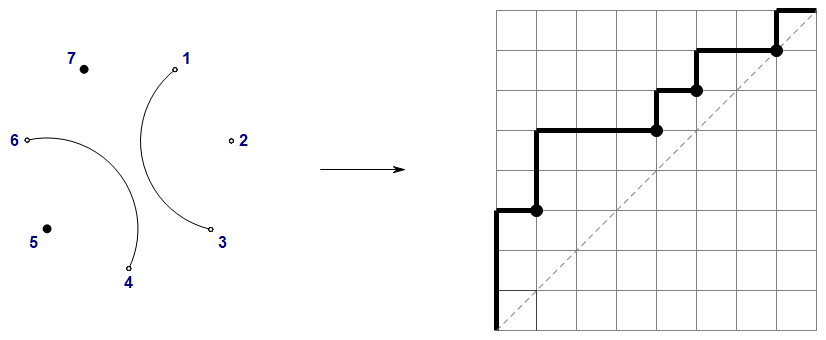}
     \caption{A noncrossing (1,2)-configuration $x$ and its image $\pi=\varphi_2^{-1}(\varphi(\varphi_1(x)))$ which has corners $(1,3)$, $(4,5)$, $(5,6)$ and $(7,7)$.}
     \label{fig3}
         \end{figure}
  
		Now, we construct a map $\varphi$ from ${NPS}_n$ to ${OPS}_n$:
		\begin{align*}
			\varphi: {NPS}_n&\to {OPS}_n,\\
			(a_1,b_1)(a_2,b_2)\cdots(a_\ell,b_\ell)&\mapsto(a_1,b_{i_1})(a_2,b_{i_2})\cdots(a_\ell,b_{i_\ell}),
		\end{align*}  
		where $\{b_{i_1},b_{i_2},\dots,b_{i_\ell}\}=\{b_1,b_2\dots,b_\ell\}$, $b_{i_1}<b_{i_2}<\cdots<b_{i_\ell}$ and $\varphi(\emptyset)=\emptyset$. .
		As $|{NPS}_n|=|X_n|$, $|{OPS}_n|=|\mathcal D_n|$, and $|X_n|=|\mathcal D_n|=\frac{1}{n+1}\binom{2n}{n}$, proving $\varphi$ bijective is equivalent to proving it surjective. Now, given a nonempty ordered pair sequence $(a_1,b_1)(a_2,b_2)\dots(a_\ell,b_\ell)$, we use the following algorithm to find its preimage. Firstly, we match the same elements of $\{a_1,a_2,\dots,a_\ell\}$ and $\{b_1,b_2,\dots,b_\ell\}$ to obtain several pairs $(a_i,a_i)$ which do not cross each other . If there are no elements left, we are done, i.e., 
		$$\varphi((a_1,a_1)(a_2,a_2)\dots(a_\ell,a_\ell)) =(a_1,a_1)(a_2,a_2)\dots(a_\ell,a_\ell).$$ 
		Otherwise, we write the remaining elements as $\{a_{i_1},a_{i_2},\dots,a_{i_s}\}$ and $\{b_{j_1},b_{j_2},\dots,b_{j_s}\}$ such that $a_{i_1}<a_{i_2}<\cdots<a_{i_s}$ and $b_{j_1}<b_{j_2}<\cdots<b_{j_s}$. Next, we match $b_{j_1}$ with some element of $\{a_{i_1},a_{i_2},\dots,a_{i_s}\}$. Clearly, $b_{j_1}>a_{i_1}$. We distinguish two cases:
		
		{\bf Case 1:} $b_{j_1}>a_{i_s}$. We match $b_{j_1}$ with $a_{i_s}$ and obtain a pair $(a_{i_s},b_{j_1})$. Clearly, $(a_{i_s},b_{j_1})$ does not cross $(a_i,a_i)$. Since $a_{i_p}<a_{i_s}<b_{j_1}<b_{i_q}$ for $p<s$ and $q>1$, $(a_{i_s},b_{j_1})$ will not cross any pair obtained from other remaining elements.
		
		{\bf Case 2:} $a_{i_k}<b_{j_1}<a_{i_{k+1}}$ for $1\leq k\leq s-1$. We match $b_{j_1}$ with $a_{i_k}$ and obtain a pair $(a_{i_k},b_{j_1})$. Clearly, $(a_{i_k},b_{j_1})$ does not cross $(a_i,a_i)$. Since $a_{i_p}<a_{i_k}<b_{j_1}<a_{i_q}$ for $p<k<q$ and $a_{i_k}<b_{j_1}<b_{i_m}$ for $m>1$, $(a_{i_k},b_{j_1})$ will also not cross any pairs obtained from other remaining elements.
		
		we can repeat the above steps to match $b_{j_2}$, $b_{j_3}$, and so on. Finally, we arrange these pairs in ascending order from the first position and obtain a sequence 
		$$(a_1,b_{m_1})(a_2,b_{m_2})\cdots(a_\ell,b_{m_\ell}).$$
		Note that $(a_1,b_{m_1})(a_2,b_{m_2})\cdots(a_\ell,b_{m_\ell})\in {NPS}_n$ and $$\varphi((a_1,b_{m_1})(a_2,b_{m_2})\cdots(a_\ell,b_{m_\ell}))=(a_1,b_1)(a_2,b_2)\cdots(a_\ell,b_\ell).$$ Therefore, $\varphi$ is bijective.
		
		For $x\in X_n$, we have $\mathrm{cwt}(x)=\text{maj}\left(\sigma\left(\varphi_2^{-1}(\varphi(\varphi_1(x)))\right)\right)$. Since $\varphi_2^{-1}\circ\varphi\circ \varphi_1$ is a bijection from $X_n$ to $\mathcal D_n$, we have
		\begin{equation*}
			\sum_{x\in X_n}q^{\mathrm{cwt}(x)}=\sum_{\pi\in \mathcal D_n}q^{\text{maj}(\sigma(\pi))}.
		\end{equation*}
		By Lemma \ref{lem.Mac}, we complete the proof.
	\end{proof}

 \section{The equivalence of two sieving functions}
	In this section, we answer Problem \ref{prob2}. 
	\begin{theorem}
		We have 
		\begin{equation*}
			q^{\binom{n}{2}}\text{\bf fd}((FDR_n)_{i+j=n-1})\equiv \mathrm{Cat}_n(q)\pmod{q^{n-1}-1},
		\end{equation*}
	where 
	\begin{equation*}
		\text{\bf fd}((FDR_n)_{i+j=n-1})=\sum_{\substack{k,x,y\\2k+x+y=n-1}}\begin{bmatrix}
			n-1\\
			2k,x,y
		\end{bmatrix}_q\mathrm{Cat}_k(q)q^{k+\binom{x}{2}+\binom{y}{2}}.
	\end{equation*}
	\end{theorem}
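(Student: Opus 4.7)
The plan is to reduce the congruence modulo $q^{n-1}-1$ to verifying equality at every $(n-1)$-th root of unity. Since $q^{n-1}-1=\prod_{\zeta^{n-1}=1}(q-\zeta)$ factors into distinct linear factors over $\mathbb{C}$, the congruence $f(q)\equiv g(q)\pmod{q^{n-1}-1}$ is equivalent to $f(\zeta)=g(\zeta)$ for every $(n-1)$-th root of unity $\zeta$. So I fix a divisor $d$ of $n-1$ and a primitive $d$-th root of unity $\zeta$, and write $n-1=md$.

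The principal tool will be the $q$-Lucas theorem: $\begin{bmatrix}md\\ k\end{bmatrix}_\zeta=\binom{m}{k/d}$ when $d\mid k$ and $0$ otherwise. For the left-hand side, applying $q$-Lucas to $\begin{bmatrix}2n\\ n\end{bmatrix}_q$ (using $2n=2md+2$, $n=md+1$) together with $[n+1]_\zeta=1+\zeta$ gives $\text{Cat}_n(\zeta)=\binom{2m}{m}$ whenever $d>2$. The case $d=1$ is the classical identity $\text{Cat}_n=\sum_k\binom{n-1}{2k}2^{n-1-2k}\text{Cat}_k$ (which counts $|X_n|$); the case $d=2$ requires cancelling $[n+1]_q$ from $\begin{bmatrix}2n\\ n\end{bmatrix}_q$ at the polynomial level before substitution.

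On the right-hand side, $q$-Lucas forces each surviving index $(k,x,y)$ to satisfy $d\mid 2k$, $d\mid x$, $d\mid y$, so writing $2k=ad$, $x=bd$, $y=cd$ with $a+b+c=m$ reduces the $q$-multinomial to $\binom{m}{a,b,c}$. A further use of $q$-Lucas together with the evaluation of $[k+1]_\zeta$ reduces $\text{Cat}_k(\zeta)$ to a central binomial coefficient (or zero) depending on the parities of $a$ and $d$; congruences modulo $d$ make the phase $\zeta^{k+\binom{x}{2}+\binom{y}{2}+\binom{n}{2}}$ equal to $1$ in the $d$ odd case, and a short parity calculation handles $d$ even. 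In the cleanest case ($d$ odd), the reduced identity becomes
$$\sum_{a'\ge 0}\binom{m}{2a'}\binom{2a'}{a'}2^{m-2a'}=\binom{2m}{m},$$
a classical formula obtained by extracting the coefficient of $x^m$ from $(1+x)^{2m}=(1+2x+x^2)^m$.

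The main obstacle will be the careful bookkeeping of the phase factors and surviving terms in the even-$d$ subcases, and in particular the $d=2$ case where both $[n+1]_q$ and $[k+1]_q$ vanish at $\zeta$: these $0/0$ cancellations must be carried out on polynomials before substitution, using that $[n+1]_q$ divides $\begin{bmatrix}2n\\ n\end{bmatrix}_q$ and $[k+1]_q$ divides $\begin{bmatrix}2k\\ k\end{bmatrix}_q$ as polynomials in $q$.
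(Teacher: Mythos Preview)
Your proposal is correct and follows essentially the same route as the paper: reduce modulo $q^{n-1}-1$ to evaluation at each $(n-1)$-th root of unity, apply $q$-Lucas to the $q$-multinomial and to $\mathrm{Cat}_k(q)$, check that the exponent $k+\binom{x}{2}+\binom{y}{2}+\binom{n}{2}$ is divisible by the order $d$ of $\zeta$ on the surviving terms, and finish with the coefficient identity coming from $(1+z)^{2m}=(1+2z+z^2)^m$. The only cosmetic differences are organizational (you split by the parity of $d$ whereas the paper splits into $d=1$, $d=2$, $d>2$) and that for $d=1$ you invoke Thiel's identity $\mathrm{Cat}_n=\sum_k\binom{n-1}{2k}2^{n-1-2k}\mathrm{Cat}_k$ directly, while the paper rederives the needed sum via the derivative trick $(2n+1)(1+z)^{2n}=\bigl((1+z)^{2n+1}\bigr)'$.
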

	\begin{proof}
		Let $\zeta_{n-1}$ be a primitive $(n-1)$-th root of unity and for simplicity, we denote $$P_n(q)=q^{\binom{n}{2}}\text{\bf fd}((FDR_n)_{i+j=n-1}).$$
		It suffices to prove that $P_n(\zeta_{n-1}^\ell)=\text{Cat}_n(\zeta_{n-1}^\ell)$ for $0\leq\ell\leq n-2$.
  
		Let $m=\frac{n-1}{\text{gcd}(n-1,\ell)}$.
		It is easy to see
		\begin{align*}
			&\text{Cat}_n(\zeta_{n-1}^\ell)=\begin{cases}
				\frac{1}{n+1}\binom{2n}{n}=\frac{1}{2n+1}\binom{2n+1}{n},&\text{ if }m=1 ,\ \zeta_{n-1}^\ell=1,\\
                &\\
				\binom{n}{\frac{n-1}{2}},&\text{ if }m=2,\ \zeta_{n-1}^\ell=-1,\\
                &\\
				\binom{\frac{2(n-1)}{m}}{\frac{n-1}{m}},&\text{ if }m>2,\\			
			\end{cases}\\
			&\begin{bmatrix}
				n-1\\
				2k,x,y
			\end{bmatrix}_{q=\zeta_{n-1}^\ell} =\begin{cases}
			\begin{pmatrix}
				\frac{n-1}{m}\\
				\frac{2k}{m},\frac{x}{m},\frac{y}{m}
			\end{pmatrix} ,& \text{ if } m|2k,\ m|x, \text{ and } m|y,\\
                &\\
			0,& \text{ otherwise. }
		\end{cases}
		\end{align*}
		We distinguish three cases:
		
		{\bf Case 1:} $m=1$ and $\zeta_{n-1}^\ell=1$. We have
		\begin{align*}
			P_n(1)&=\sum_{\substack{k,x,y\\2k+x+y=n-1}}\binom{n-1}{2k,x,y}\frac{1}{k+1}\binom{2k}{k}\\
				&=\sum_{\substack{k,x,y\\2k+x+y=n-1}}\binom{n-1}{k,k,x,y}\frac{1}{k+1}\\
				&=\frac{1}{n}\sum_{\substack{k,x,y\\2k+x+y=n-1}}\binom{n}{k,k+1,x,y}.\\
		\end{align*}
		Considering the coefficient (say $e_1$) of the term $z^{n-1}$ in $(z^2+z+z+1)^n$ and the coefficient (say $e_2$) of the term $z^n$ in $(z+1)^{2n+1}$, since $(2n+1)(z^2+z+z+1)^n=((z+1)^{2n+1})'$,  we have
		\begin{align*}
                &(2n+1)e_1=ne_2,\\
			&e_1=\sum_{\substack{k,x,y\\2k+x+y=n-1}}\binom{n}{k,k+1,x,y},\\
			&e_2=\binom{2n+1}{n},
		\end{align*}
		which implies that $P_n(1)=\text{Cat}_n(1)$.
		
		{\bf Case 2:} $m=2$ and $\zeta_{n-1}^\ell=-1$. When $2k+x+y=n-1$, $2|x$ and $2|y$, we have 
		\begin{equation*}
			\binom{n}{2}+k+\binom{x}{2}+\binom{y}{2}=\frac{(n-1)^2}{2}+\frac{x^2}{2}+\frac{y^2}{2}+2k\equiv 0\pmod 2,
		\end{equation*}
	and therefore
	\begin{equation*}
		q^{\binom{n}{2}}q^{k+\binom{x}{2}+\binom{y}{2}}|_{q=-1}=1.
	\end{equation*}
	Thus, we have 
	\begin{align*}
		P_n(-1)&=\sum_{\substack{k,x,y\\2|x,2|y\\2k+x+y=n-1}}\begin{pmatrix}
			\frac{n-1}{2}\\
			k,\frac{x}{2},\frac{y}{2}
		\end{pmatrix}\text{Cat}_k(-1)\\
	&=\sum_{\substack{k,x,y\\2|x,2|y\\k+\frac{x}{2}+\frac{y}{2}=\frac{n-1}{2}}}\begin{pmatrix}
		\frac{n-1}{2}\\
		k,\frac{x}{2},\frac{y}{2}
	\end{pmatrix}\text{Cat}_k(-1)\\
		&=\sum_{\substack{a,b,c\\2a+b+c=\frac{n-1}{2}}}\begin{pmatrix}
			\frac{n-1}{2}\\
			a,a,b,c
		\end{pmatrix}+
		\sum_{\substack{a,b,c\\2a-1+b+c=\frac{n-1}{2}}}\begin{pmatrix}
			\frac{n-1}{2}\\
			a,a-1,b,c
		\end{pmatrix}.
	\end{align*}
	Considering the coefficient (say $e_3$ and $e_4$) of the term $z^{\frac{n+1}{2}}$ in $(z^2+z+z+1)^{\frac{n-1}{2}}(z+1)$ and $(z+1)^n$, we have
	\begin{align*}
		e_3&=e_4,\\
		e_3&=\sum_{\substack{a,b,c\\2a+b+c=\frac{n-1}{2}}}\begin{pmatrix}
			\frac{n-1}{2}\\
			a,a,b,c
		\end{pmatrix}+
	\sum_{\substack{a,b,c\\2a+b+c=\frac{n+1}{2}}}\begin{pmatrix}
		\frac{n-1}{2}\\
		a,a-1,b,c
	\end{pmatrix}\\
	&=\sum_{\substack{a,b,c\\2a+b+c=\frac{n-1}{2}}}\begin{pmatrix}
		\frac{n-1}{2}\\
		a,a,b,c
	\end{pmatrix}+
	\sum_{\substack{a,b,c\\2a-1+b+c=\frac{n-1}{2}}}\begin{pmatrix}
		\frac{n-1}{2}\\
		a,a-1,b,c
	\end{pmatrix},\\
	e_4&=\binom{n}{\frac{n+1}{2}},
	\end{align*}
	which implies that $P_n(-1)=\text{Cat}_n(-1)$.
	
	{\bf Case 3:} $m>2$. When $m|2k$, we have 
	\begin{equation*}
		\text{Cat}_k(\zeta_{n-1}^\ell)=\begin{cases}
			\binom{\frac{2k}{m}}{\frac{k}{m}},&\text{ if }m|k,\\
			0,&\text{otherwise.}
		\end{cases}
	\end{equation*}
	When $2\not|m$, $m|k$, $m|x$, $m|y$ and $2k+x+y=n-1$, it is clearly that \begin{equation*}
		\binom{n}{2}+k+\binom{x}{2}+\binom{y}{2}\equiv 0\pmod m.
	\end{equation*}
	When $2|m$, $m|k$, $m|x$, $m|y$ and $2k+x+y=n-1$, we have 
	\begin{equation*}
		\binom{n}{2}+k+\binom{x}{2}+\binom{y}{2}=\frac{(n-1)^2}{2}+\frac{x^2}{2}+\frac{y^2}{2}+2k\equiv 0\pmod m.
	\end{equation*}
	Therefore, in any case above, we have 
	\begin{equation*}
		q^{\binom{n}{2}}q^{k+\binom{x}{2}+\binom{y}{2}}|_{q=\zeta_{n-1}^\ell}=1.
	\end{equation*}
	Thus, we have 
	\begin{align*}
		P_n(\zeta_{n-1}^\ell)&=\sum_{\substack{k,x,y\\m|k,m|x,m|y\\2k+x+y=n-1}}\begin{pmatrix}
			\frac{n-1}{m}\\
			\frac{2k}{m},\frac{x}{m},\frac{y}{m}
		\end{pmatrix}\binom{\frac{2k}{m}}{\frac{k}{m}}\\
		&=\sum_{\substack{k,x,y\\m|k,m|x,m|y\\\frac{2k}{m}+\frac{x}{m}+\frac{y}{m}=\frac{n-1}{m}}}\begin{pmatrix}
			\frac{n-1}{m}\\
			\frac{k}{m},\frac{k}{m},\frac{x}{m},\frac{y}{m}
		\end{pmatrix}\\
		&=\sum_{\substack{a,b,c\\2a+b+c=\frac{n-1}{m}}}\begin{pmatrix}
			\frac{n-1}{m}\\
			a,a,b,c
		\end{pmatrix}.
	\end{align*}
	Considering the coefficient (say $e_5$ and $e_6$) of the term $z^{\frac{n-1}{m}}$ in $(z^2+z+z+1)^{\frac{n-1}{m}}$ and $(z+1)^{\frac{2(n-1)}{m}}$, we have
	\begin{align*}
		e_5&=e_6,\\
		e_5&=
		\sum_{\substack{a,b,c\\2a+b+c=\frac{n-1}{2}}}\begin{pmatrix}
			\frac{n-1}{2}\\
			a,a,b,c
		\end{pmatrix},\\
	e_6&=\binom{\frac{2(n-1)}{m}}{\frac{n-1}{m}}.
	\end{align*}
	which implies that $P_n(\zeta_{n-1}^\ell)=\text{Cat}_n(\zeta_{n-1}^\ell)$.
	\end{proof}

\section{Dihedral sieving on noncrossing (1,2)-configurations}
	In this section, we prove a dihedral sieving result on $X_n$. 
 
    Let  $\langle \tau \rangle$ be a cyclic group of order $2$ such that $\tau$ acts on $X_n$ by flipping the diagrams left and right, i.e., $\tau:[n-1]\rightarrow[n-1],\ i\mapsto n-i$  (see Figure \ref{fig2} for an example). Then we have the following cyclic sieving result.
		\begin{lemma}\label{lem4.1}
		Let $X_n$ be the set of noncrossing  (1,2)-configurations of $[n-1]$. Let 
		$$X(q):=\mathrm{Cat}_n(q)=\frac{1}{[n+1]_q}\begin{bmatrix}
			2n\\
			n
		\end{bmatrix}_q,$$
		then $$(X_n,\langle \tau \rangle,X(q))$$exhibits the cyclic sieving phenomenon.
	\end{lemma}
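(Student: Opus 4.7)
Since $\langle\tau\rangle$ is cyclic of order $2$, verifying the CSP for $(X_n,\langle\tau\rangle,\mathrm{Cat}_n(q))$ reduces to the two numerical identities
\begin{equation*}
\mathrm{Cat}_n(1)=|X_n|\qquad\text{and}\qquad \mathrm{Cat}_n(-1)=|\{x\in X_n:\tau(x)=x\}|.
\end{equation*}
The first is Thiel's theorem, so only the second needs attention. The plan is to transport the fixed-point count to Dyck paths via the bijection $\Phi:=\varphi_2^{-1}\circ\varphi\circ\varphi_1 : X_n\to \mathcal{D}_n$ built in Section~2 and then invoke a classical central-binomial count, while evaluating $\mathrm{Cat}_n(-1)$ by the methods of Section~3.

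For the Dyck-path side, let $\rho$ denote the involution on $\mathcal{D}_n$ that reverses a path and swaps $N$ with $E$; on corners $\rho$ acts as $(a,b)\mapsto(n-b,n-a)$. The key intermediate claim is that $\Phi$ is equivariant, that is, $\Phi\circ\tau=\rho\circ\Phi$. The outer pieces $\varphi_1$ and $\varphi_2$ commute with the corresponding involutions essentially by inspection, since they only relabel arcs and balls as pairs and read off corners. For the middle map $\varphi$, I would use the characterization that $\varphi((a_1,b'_1)\cdots(a_\ell,b'_\ell))$ pairs $a_k$ with the $k$-th smallest of the $b'_i$; replacing each pair by its $\tau$-mirror and re-sorting by minima then produces precisely the ordered pair sequence predicted by $\rho\circ\Phi$, with a short case check confirming uniformity across balls, arcs, and mixed configurations.

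Once equivariance is in hand, the fixed points of $\tau$ on $X_n$ correspond bijectively to the $\rho$-symmetric elements of $\mathcal{D}_n$. A $\rho$-symmetric Dyck path of semilength $n$ is determined by its first $n$ steps, and a standard ballot-style argument counts such paths as $\binom{n}{\lfloor n/2\rfloor}$. On the polynomial side, $\mathrm{Cat}_n(-1)=\binom{n}{\lfloor n/2\rfloor}$ follows from the same $q$-Lucas and cancellation techniques used in Case~2 of the proof of Theorem~3.1, after treating the parities of $n$ separately to handle the apparent $0/0$ indeterminacy in $\mathrm{Cat}_n(q)=[2n]!_q/([n+1]!_q[n]!_q)$ when $n$ is odd.

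The main obstacle I anticipate is verifying the equivariance of $\varphi$: the greedy matching on noncrossing pair sequences is not visibly symmetric under $i\mapsto n-i$, so a careful induction on $\ell$ (or an alternative intrinsic description of $\Phi$ directly from arcs and balls, bypassing the matching) will be needed. As a fallback, one can avoid $\Phi$ altogether and enumerate flip-symmetric $(1,2)$-configurations by direct case analysis on the center---a fixed ball or nothing when $n$ is even or odd respectively, together with any nested center-crossing arcs---and establish the count $\binom{n}{\lfloor n/2\rfloor}$ by a Pascal-style recurrence.
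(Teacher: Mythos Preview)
Your approach via the bijection $\Phi$ is correct, and it is genuinely different from the paper's argument. The paper never invokes $\Phi$ at all: it computes $\mathrm{Cat}_n(-1)=\binom{n}{\lfloor n/2\rfloor}$ directly, and then counts $\tau$-fixed configurations by a case analysis on vertex~$1$ (empty, ball, arc to $m$, or arc to some $i\le\lfloor m/2\rfloor$), obtaining the recurrence $F(m)=3F(m-2)+\sum_{i\ge 2}\mathrm{Cat}_{i-1}F(m-2i)$ and solving it with generating functions to recover $\binom{n}{\lfloor n/2\rfloor}$. Your route trades this recurrence-and-generating-function computation for the equivariance check plus the classical ballot count of self-reversed Dyck paths; it is shorter and more conceptual, and it reuses the machinery of Section~2, whereas the paper's proof is self-contained but heavier.

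One remark on the obstacle you flag: the equivariance of $\varphi$ is in fact immediate, not delicate. The forward map $\varphi$ sends a noncrossing pair sequence to the \emph{unique} ordered pair sequence with the same multiset of first coordinates and the same multiset of second coordinates; it depends only on these two multisets. The involution $i\mapsto n-i$ sends $(a,b)$ to $(n-b,n-a)$, hence swaps the two multisets (after complementing), and exactly the same description holds for $\rho$ on $OPS_n$. So $\varphi\circ\tau'=\rho'\circ\varphi$ with no case analysis at all. The ``greedy matching'' you worry about is the construction of $\varphi^{-1}$ in the paper's surjectivity argument; you never need its symmetry, only that $\varphi$ is a bijection. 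Your fallback (case analysis at the centre) is in spirit closer to the paper's method, though the paper recurses from an endpoint rather than the centre.
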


 \begin{figure}
     \centering
     \includegraphics{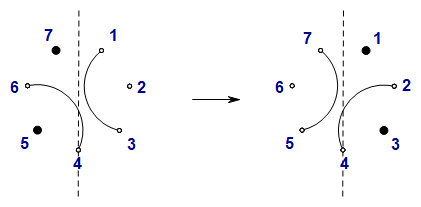}
     \caption{A noncrossing (1,2)-configuration $x$ and its reflection $\tau(x)=\{\{1\},\{2,4\},\{3\},\{5,7\}\}$.}
     \label{fig2}
 \end{figure}
	
\begin{proof}
Since $|X_n|=\frac{1}{n+1}\binom{2n}{n}=\mathrm{Cat}_n(1)$, it suffies to prove that
$$|\{x\in X_n:\tau(x)=x\}|=\text{Cat}_n(-1).$$
Clearly,
$$\text{Cat}_n(-1)=\frac{1}{[n+1] _{q=-1}}\begin{bmatrix}
2n\\n
\end{bmatrix}_{q=-1}=
\left\{\begin{matrix}
\binom{n}{\frac{n}{2}},&n\text{ is even,} \\
&\\
\binom{n}{\frac{n-1}{2}},&n\text{ is odd.}
\end{matrix}\right.$$
  For another side, we need to compute the number of elements of $X_n$ which are fixed under the action of $\tau$ (we call these noncrossing (1,2)-configurations fixed points). We proceed by induction on $n$.	
  Let $F(n-1)=|\{x\in X_n:\tau(x)=x\}|$ and $m=n-1$.  By simple observation, we can see $F(m+1)=2F(m)$ for $m$ is even. To compute the fixed points, we distinguish four cases: 
			
		{\bf Case 1:} Vertex $1$ is empty. Then the number of fixed points is $F(m-2)$.
		
		{\bf Case 2:} Vertex $1$ is a ball. Then the number of fixed points is $F(m-2)$.
		
		{\bf Case 3:} There is an arc connecting vertex $1$ and vertex $m$ for $m\geq 2$. Then the number of fixed points is $F(m-2)$.
		
		{\bf Case 4:} There is an arc connecting vertex $1$ and vertex $i$ for $2\leq i\leq \left \lfloor \frac{m}{2} \right \rfloor $ and $m\geq 4$. Then the number of fixed points is $\text{Cat}_{i-1}F(m-2i)$.
		
		Therefore, 
		$$F(m)=3F(m-2)+\sum_{i=2 }^{\left \lfloor \frac{m}{2} \right \rfloor}\text{Cat}_{i-1}F(m-2i),$$
		where $m\ge 4$. It can be directly verified that $F(0)=1,\ F(1)=2,\ F(2)=3$, and $F(3)=6$.
		
		Define $$f(x):=\sum\limits_{n=0}^{\infty}\text{Cat}_nx^n,$$
		where $\text{Cat}_0=1$. It is well-known that $f(x)=\frac{1-\sqrt{1-4x}}{2x}$.
		
		Let $B(2n)=\text{Cat}_n$ and $$g(x)=f(x^2)=\sum\limits_{n=0}^{\infty}B(2n)x^{2n}=\frac{1-\sqrt{1-4x^2}}{2x^2}$$
		where $g(0)=1$.
		Let $m=2k$. We have 
		\begin{equation}
			\begin{aligned}
				F(m)=F(2k)&=3F(2k-2)+\sum_{i=2 }^{k}\text{Cat}_{i-1}F(2k-2i)\\
				&=3F(2k-2)+\sum_{i=2 }^{k}B(2i-2)F(2k-2i)\\
				&=2F(2k-2)+\sum\limits_{i=0}^{k-1}B(2i)F(2k-2-2i).
			\end{aligned}
			\nonumber
		\end{equation}
		Let
		$$h(x):=\sum\limits_{k=0}^{\infty}F(2k)x^{2k}.$$
		Then
		\begin{align*}
			h(x)g(x)&=\sum_{k=0}^{\infty}\left(\sum_{n=0}^{k}B(2n)F(2k-2n)\right)x^{2k}\\
			&=\sum_{k=0}^{\infty}\left(F(2k+2)-2F(2k)\right)x^{2k}\\
			&=\sum_{k=0}^{\infty}F(2k+2)x^{2k}-2\sum_{k=0}^{\infty}F(2k)x^{2k}\\
			&=\frac{h(x)-1}{x^2}-2h(x).
		\end{align*}
		It follows that
		$$h(x)=\frac{2}{1-4x^2+\sqrt{1-4x^2}}.$$
		It is easy to see that $$h(x)=\sum\limits_{k=0}^{\infty}\binom{2k+1}{k}x^{2k}.$$
		Thus, we have 
		$$F(n-1)=
		\left\{\begin{matrix}
			\binom{n}{\frac{n}{2}},        &n\text{ is even,} \\
            &\\
			\binom{n}{\frac{n-1}{2}},
            &n\text{ is odd.}
		\end{matrix}\right.$$
		which agrees with the value of $\text{Cat}_n(-1)$.
	\end{proof}

\begin{lemma}{\cite[Equation (7)]{MR3282645}}\label{lem4.2}
    We make substitutions $q=X+Y,\quad t=-X/Y$ to simplify the Fibonomial coefficient
    $\begin{Bmatrix}
     n\\k
    \end{Bmatrix}_{q,t}$.Then we have
     $$X=\frac{q+\sqrt{q^2+4t}}{2},\quad Y=\frac{q-\sqrt{q^2+4t}}{2},$$
     $$\begin{Bmatrix}
 n\\
k
\end{Bmatrix}_{q,t}=Y^{k(n-k)}\begin{bmatrix}
n \\
k
\end{bmatrix}_{q=X/Y}.$$
\end{lemma}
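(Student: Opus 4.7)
The plan is to prove the stated Fibonomial identity via a Binet-type closed form for $\{n\}_{q,t}$. First I would observe that the recurrence $\{n+2\}_{q,t}=q\{n+1\}_{q,t}+t\{n\}_{q,t}$ is a homogeneous linear recurrence whose characteristic polynomial is $z^2-qz-t$. Demanding that $X,Y$ be the roots of this polynomial gives the Vieta relations $X+Y=q$ and $XY=-t$; the explicit radicals displayed for $X$ and $Y$ in the lemma are then exactly the two roots of $z^2-qz-t=0$ by the quadratic formula. (I read the ``$t=-X/Y$'' in the statement as a typo for $t=-XY$, since $XY=-t$ is what is needed for the closed form below to reproduce the recurrence.)

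With $X,Y$ as above and the initial data $\{0\}_{q,t}=0$, $\{1\}_{q,t}=1$, the unique solution of the recurrence is
$$\{n\}_{q,t}=\frac{X^n-Y^n}{X-Y}.$$
The crucial step is to convert this to a classical $q$-integer by factoring $Y^{n-1}$:
$$\{n\}_{q,t}=Y^{n-1}\cdot\frac{(X/Y)^n-1}{(X/Y)-1}=Y^{n-1}\,[n]_{q=X/Y}.$$
Taking the product over $1\le i\le n$ yields $\{n\}!_{q,t}=Y^{\binom{n}{2}}\,[n]!_{q=X/Y}$.

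Substituting this into the defining ratio of the Fibonomial coefficient, all three $q$-factorials produce matching factors of $[n]!, [k]!, [n-k]!$ evaluated at $q=X/Y$, and the powers of $Y$ collect to a single overall factor $Y^{\binom{n}{2}-\binom{k}{2}-\binom{n-k}{2}}$. A direct expansion shows
$$\binom{n}{2}-\binom{k}{2}-\binom{n-k}{2}=k(n-k),$$
which produces exactly the identity claimed. There is no real obstacle here; the only subtlety is the choice of branch for $\sqrt{q^2+4t}$, but since swapping $X\leftrightarrow Y$ corresponds on the right-hand side to the well-known $q\leftrightarrow q^{-1}$ symmetry of the $q$-binomial (and leaves $Y^{k(n-k)}$ multiplied by the $q$-binomial unchanged), either branch yields an equivalent statement. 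The whole argument is a routine packaging of the classical derivation of $q$-binomial formulas from Binet-type expressions.
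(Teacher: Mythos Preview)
The paper does not supply its own proof of this lemma; it is quoted verbatim from \cite[Equation~(7)]{MR3282645} and used as a black box in the proof of the dihedral sieving theorem. Your derivation via the Binet closed form $\{n\}_{q,t}=(X^n-Y^n)/(X-Y)$, the rewriting $\{n\}_{q,t}=Y^{n-1}[n]_{X/Y}$, and the binomial identity $\binom{n}{2}-\binom{k}{2}-\binom{n-k}{2}=k(n-k)$ is correct and is precisely the standard argument (and the one in the cited source). Your observation that ``$t=-X/Y$'' in the statement should read $t=-XY$ is also right: from the displayed radicals one computes $XY=\tfrac{q^2-(q^2+4t)}{4}=-t$, and it is $XY=-t$ (not $X/Y$) that makes $X,Y$ the roots of $z^2-qz-t$.
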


 Next, we prove the main result of this section.
\begin{theorem}
        Let $X_n$ be the set of noncrossing $(1,2)$-configuration of $[n-1]$ and $X(q,t)=\frac{1}{\{n+1\}_{q,t}}\begin{Bmatrix}
 2n\\
n
\end{Bmatrix}_{q,t}$. Then the quadruple
$$(X_n,\ I_2(n-1),\ \{z_1,-\det\},\ X(q,t))$$
exhibits the dihedral sieving.    
\end{theorem}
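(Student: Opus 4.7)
The plan is to verify the dihedral sieving by matching characters on each conjugacy class of $I_2(n-1)$. The first step is routine: a direct computation shows $\tau r \tau(i) = \tau(n-i+1) = i-1 = r^{-1}(i)$, so $X_n$ carries a well-defined $I_2(n-1)$-action. Since $n$ is even, $n-1$ is odd, so the conjugacy classes of $I_2(n-1)$ are $\{e\}$, $\{r^d, r^{-d}\}$ for $1 \le d \le (n-2)/2$, and a single class consisting of all reflections. By the representation-ring formulation of the sieving, it suffices to establish the character identity $|X_n^g| = X(\chi_{z_1}(g),\,-\det z_1(g))$ for each $g \in I_2(n-1)$.

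Next I would apply Lemma \ref{lem4.2} to reduce $X(q,t)$ to a $q$-Catalan expression. Writing $X+Y = q$ and $XY = -t$ (so $X, Y$ are the roots of $z^2 - qz - t = 0$), one checks that $\{n+1\}_{q,t} = Y^n [n+1]_{X/Y}$ and $\begin{Bmatrix}2n\\n\end{Bmatrix}_{q,t} = Y^{n^2}\begin{bmatrix}2n\\n\end{bmatrix}_{X/Y}$, giving
$$X(q,t) = Y^{n(n-1)}\,\text{Cat}_n(X/Y).$$
Palindromicity of $\text{Cat}_n(q)$, which has degree $n(n-1)$, ensures the right-hand side is symmetric in $X, Y$, hence a genuine polynomial in $q$ and $t$.

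For a rotation $g = r^d$, the eigenvalues of $z_1(r^d)$ are $\zeta_{n-1}^{\pm d}$ and $\det z_1(r^d) = 1$, so substituting $X = \zeta_{n-1}^d,\ Y = \zeta_{n-1}^{-d}$ yields $X(\chi_{z_1}(r^d), -1) = \text{Cat}_n(\zeta_{n-1}^{2d})$ (the prefactor $\zeta_{n-1}^{-dn(n-1)}$ equals $1$). On the other hand, Thiel's CSP gives $|X_n^{r^d}| = \text{Cat}_n(\zeta_{n-1}^d) \in \mathbb{Z}$, and since $n-1$ is odd, $\zeta_{n-1}^d$ and $\zeta_{n-1}^{2d}$ are Galois conjugates over $\mathbb{Q}$; invariance of integer values under the Galois group gives $\text{Cat}_n(\zeta_{n-1}^{2d}) = \text{Cat}_n(\zeta_{n-1}^d)$, establishing the character equality on the rotation classes. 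For a reflection $g = sr^i$, $z_1(g)$ has eigenvalues $\pm 1$ and determinant $-1$, so substituting $X = 1,\ Y = -1$ gives the reduced value $(-1)^{n(n-1)}\text{Cat}_n(-1) = \text{Cat}_n(-1)$; by Lemma \ref{lem4.1}, together with the fact that all reflections in $I_2(n-1)$ (odd order) form a single conjugacy class, $|X_n^{sr^i}| = \text{Cat}_n(-1)$ for every $i$.

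The main obstacle, to my mind, is not any individual character computation but verifying that $X(q,t)$ actually has nonnegative integer coefficients, so that $X(z_1,-\det)$ defines a genuine element of $\text{Rep}(I_2(n-1))$; the quotient by $\{n+1\}_{q,t}$ is not manifestly polynomial in $(q,t)$. This integrality should follow either from a direct combinatorial interpretation of the Fibonomial quotient (in parallel with Rao--Suk's treatment of noncrossing partitions of the $n$-gon, which uses the same polynomial) or from an inductive argument using the recursion for $\{n\}_{q,t}$. Once integrality is in hand, the character matching above on the three types of conjugacy classes completes the proof.
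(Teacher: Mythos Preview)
Your proof is correct and follows essentially the same route as the paper's: reduce $X(q,t)$ to $Y^{n(n-1)}\,\mathrm{Cat}_n(X/Y)$ via Lemma~\ref{lem4.2}, invoke Thiel's CSP for the rotation classes, and invoke Lemma~\ref{lem4.1} for the single reflection class. Your Galois-conjugacy argument that $\mathrm{Cat}_n(\zeta_{n-1}^{2d})=\mathrm{Cat}_n(\zeta_{n-1}^{d})$ is equivalent to the paper's observation that $\gcd(n-1,\ell)=\gcd(n-1,2\ell)$ when $n-1$ is odd.

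Your final paragraph, however, manufactures an obstacle that is not there. The definition of $G$-sieving in the paper only requires $X(q,t)\in\mathbb{Z}[q,t]$, not $\mathbb{Z}_{\ge 0}[q,t]$: the representation ring $\mathrm{Rep}(G)$ already contains virtual representations, so negative coefficients cause no difficulty. And you have already established integrality yourself: your palindromicity observation shows that $Y^{n(n-1)}\,\mathrm{Cat}_n(X/Y)$ is a homogeneous symmetric polynomial in $X,Y$ with integer coefficients, hence a polynomial in the elementary symmetric functions $X+Y=q$ and $XY=-t$, i.e.\ an element of $\mathbb{Z}[q,t]$. So the ``main obstacle'' is already dispatched two paragraphs earlier by your own argument, and the proof is complete as written. (The paper does not pause over this point either.)
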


\begin{proof}
    Let $\zeta_{n-1}$ be a primitive $(n-1)$-th root of unity. Note that the dihedral group $I_2(n-1)=\langle r,s|r^{n-1}=s^2=e,rs=sr^{-1}\rangle$, and let $C$ be the conjugacy classes of $I_2(n-1)$.

    By the definition of the  dihedral sieving, we shall show the character values of the permutation representation $\mathbb{C}[X]$. According to the cyclic sieving phenomenon of $(X_n,C_{n-1},\text{Cat}_n(q))$, we have
    $$\chi_{\mathbb{C}[X]}(r^\ell)=\text{Cat}_n(\zeta_{n-1}^\ell),$$
     where the input $r^\ell$ can refer to either the group element $r^\ell\in I_2(n-1)$ or the conjugacy class $\{r^\ell,r^{n-\ell-1}\}$ in $I_2(n-1)$. By Lemma \ref{lem4.1}, the number of fixed points under the action of $sr^\ell$ is $$\chi_{\mathbb{C}[X]}(sr^\ell)=\binom{n}{\frac{n}{2}}=\text{Cat}_n(\zeta_2)=\text{Cat}_n(-1).$$
    The remaining work is to prove
    $$\chi_{\mathbb{C}[X]}(s^ir^\ell)=\chi_{X(z_1,-\text{det})}(s^ir^\ell)$$
    for $i\in\{0,1\}$ and $\ell\in[0,n-2]$.
    By Lemma \ref{lem4.2}, we have
$$\begin{Bmatrix}
 n\\
k
\end{Bmatrix}_{q=\chi_{z_1}(C),t=\chi_{-\det}(C)}=Y^{k(n-k)}\begin{bmatrix}
n \\
k
\end{bmatrix}_{q=X/Y},$$
where $X,Y$ satisfy
$$X=\left\{\begin{matrix}
\frac{\chi_{z_1}(C)+\sqrt{\chi_{z_1}(C)^2+4\chi_{-\det}(C)} }{2}=\zeta^\ell_{n-1},&\text{if}\  C=\{r^\ell,r^{n-\ell-1}\}, \\
    &\\
  1, &\ \ \ \text{if}\  C=\{s,sr,sr^2,\dots\},
\end{matrix}\right.$$
$$Y=\left\{\begin{matrix}
\frac{z_1(C)-\sqrt{z_1(C)^2+4\chi_{-\det}(C)} }{2}=\zeta^{-\ell}_{n-1},      &\text{if} \ C=\{r^\ell,r^{n-\ell-1}\}, \\
&\\
  -1,&\ \ \ \text{if}\  C=\{s,sr,sr^2,\dots\},
\end{matrix}\right.$$
$$X/Y=\left\{\begin{matrix}
  \zeta^{2\ell}_{n-1},&\text{if}\  C=\{r^\ell,r^{n-\ell-1}\}, \\
  &\\
  -1,&\ \ \ \text{if} \ C=\{s,sr,sr^2,\dots\}.
\end{matrix}\right.$$
Then
\begin{align*}
    \{n+1\}_{\chi_{z_1}(C),\chi_{-\det}(C)}&=Y^n[n+1]_{q=X/Y}\\
    &=\left\{\begin{matrix}
 \zeta_{n-1}^{-\ell n}[n+1]_{\zeta_{n-1}^{2\ell}}, & \text{if}\  C=\{r^\ell,r^{n-\ell-1}\},\\
 &\\
  (-1)^n[n+1]_{\zeta_2},&\ \ \ \text{if}\  C=\{s,sr,sr^2,\dots\},
\end{matrix}\right.
\end{align*}
$$\begin{Bmatrix}
 2n\\
n
\end{Bmatrix}_{\chi_{z_1}(C),\chi_{-\det}(C)}=\left\{\begin{matrix}
  \zeta_{n-1}^{-\ell n^2}\begin{bmatrix}
 2n\\n
\end{bmatrix}_{q=\zeta^{2\ell}_{n-1}},&  \text{if}\  C=\{r^\ell,r^{n-\ell-1}\},\\
&\\
  (-1)^{n^2}\begin{bmatrix}
 2n\\n
\end{bmatrix}_{q=\zeta_2},&\ \ \ \text{if}\  C=\{s,sr,sr^2,\dots\}.
\end{matrix}\right.$$
Since $\text{gcd}(n-1,\ell)=\text{gcd}(n-1,2\ell)$, we have
\begin{align*}
    \frac{1}{[n+1]_{q=\zeta_{n-1}^{\ell}}}\begin{bmatrix}
 2n\\
n
\end{bmatrix}_{q=\zeta_{n-1}^\ell}=
\frac{1}{[n+1]_{q=\zeta_{n-1}^{2\ell}}}\begin{bmatrix}
 2n\\
n
\end{bmatrix}_{q=\zeta_{n-1}^{2\ell}}.
\end{align*}
Therefore,
$$X(\chi_{z_1}(C),\chi_{-\det}(C))=\left\{\begin{matrix}
  \frac{1}{\zeta_{n-1}^{-\ell n}[n+1]_{\zeta_{n-1}^\ell}}\zeta_{n-1}^{-\ell n^2}\begin{bmatrix}
 2n\\
n
\end{bmatrix}_{q=\zeta^\ell_{n-1}},& \text{if}\  C=\{r^\ell,r^{n-\ell-1}\},\\
&\\
  \frac{1}{[n+1]_{\zeta_2}}\begin{bmatrix}
 2n\\
n
\end{bmatrix}_{q=\zeta_2},&\text{if}\  C=\{s,sr,\dots\}.
\end{matrix}\right.$$
Since
$$\frac{\zeta_{n-1}^{-\ell n^2}}{\zeta_{n-1}^{-\ell n}}=\zeta_{n-1}^{-\ell n(n-1)}=1,$$
we have
\begin{align*}
    \chi_{X(z_1,-\det)}(C)&=X(\chi_{z_1}(C),\chi_{-\det}(C))\\
    &=\left\{\begin{matrix}
  \text{Cat}_n(q)|_{q=\zeta_{n-1}^\ell},&\text{if}\  C=\{r^\ell,r^{n-\ell-1}\}, \\
  &\\
  \text{Cat}_n(q)|_{q=\zeta_2}, &\ \ \ \text{if}\ C=\{s,sr,sr^2,\dots\}.
\end{matrix}\right.
\end{align*}
which agrees with $\chi_{\mathbb{C}[X]}(C)$.
This completes the proof.
\end{proof}

\section{Concluding remarks}

In this paper, we considered the CSP for a Catalan object.  In fact, Catalan numbers are special cases of rational Catalan numbers $\text{Cat}_{n,m}:=\frac{1}{n+m}\binom{n+m}{n}$; see, e.g., \cite{BR,HanZhang}. It is natural to consider the CSP for rational Catalan objects. Indeed, we shall study the CSP for zero-sum sequence over finite abelian groups \cite{HanZhang} and some preliminary results have been obtained. 

\bigskip

\subsection*{Acknowledgments}
The authors would like to thank their advisor,  Hanbin Zhang, for valuable suggestions on the manuscript.

\bibliographystyle{amsplain}


\end{document}